\newtheorem{proposition}{Proposition}[section]
\newtheorem{theorem}{Theorem}[section]
\newtheorem{observation}{Observation}[section]
\newtheorem{definition}{Definition}[section]
\newtheorem{remark}{Remark}[section]
\newenvironment{proof}
 {\begin{trivlist} \item[\hskip \labelsep {\bf Proof}\hspace*{3 mm}]}
 {\hfill$\Box$\end{trivlist}}
\begin{document}
\title{On the $FRS$-generic family of space cusps}
\author{Pouya Mehdipour \footnote{Faculty of federal university of Vi\c cosa (UFV) - Brazil.}\, and\,Mostafa Salarinoghabi\footnote{Post-doc at federal university of Vi\c cosa (UFV) - Brazil}}

\maketitle
\begin{abstract}
 We consider in this paper the $FRS$-deformations of a family of space curves with codimension $\leq 3$. Some geometric aspects of a space curve such as flattenings, vertices and twistings points has been studied.
\end{abstract}
\renewcommand{\thefootnote}{\fnsymbol{footnote}}
\footnote[0]{2010 Mathematics Subject classification 58K05, 53A04, 57R45.}
\footnote[0]{Key Words and Phrases. Space curves, generalized evolute, flatting, twisting, contact.}
\section{Introduction}\label{sec:int}
The geometry of space curves and a classification of their singularities are well studied (see for example \cite{CarmenEster1,CarmenEster2,Uribe,gibhob}).
In this paper, we consider smooth ($C^\infty$) parametrised generic space curves $\gamma : \mathbb R \to \mathbb R^3$. 
The group of diffeomorphisms $\mathcal A$ is useful when we want to study singularities of $\gamma$ but if we also be interested in the geometry of $\gamma$, such as its flattenings, vertices and twistings, then we are allowed only Euclidean motions in the space  
as diffeomorphisms destroy the geometry of the curve 
(but preserve its singularities). 

%
 
However, finding a theory that explains deformations of the singularity of a curve  
as well as the changes in the geometry of the curve  
is still an open problem. There are some results on this problem especially for plane curves (see for example \cite{NunoFabio,SenhaTari,wall,SalariTari}).

In \cite{SalariTari} has been proposed a method 
to study the geometry of deformations of singular plane curves. This method is so called {\it $FRS$-deformations of plane curves} (F for flat, R for round and S for singular). When the curve is regular they label the deformations {\it $FR$-deformations}.

More precisely, the definition of $FRS$-deformations is as following: 
\begin{definition}\label{def:main}
	Consider two germs of $m$-parameter deformations of the same plane curve: $\gamma_s, s \in ({\mathbb R}^m_1,0)$, and 
	$ \eta_u, u \in ({\mathbb R}^m_2,0).$ 
	Equip the base $({\mathbb R}^m_1,0)$ with a stratification $(S_1,0)$ such that if $s'$ and $s''$ are in the same stratum then the curves
	$\gamma_{s'}$ and $\gamma_{s''}$ satisfy the following properties:
	
	\begin{tabular}{rp{13cm}}
		{\rm (i)}& they are diffeomorphic;\\
		{\rm (ii)}& they have the same numbers of inflections and vertices;\\
		{\rm (iii)}& they have the same relative position of their singularities, points of self-intersection, inflections and vertices.
	\end{tabular}
	
	Also equip the base $({\mathbb R}^m_2,0)$ with a stratification $(S_2,0)$ with  properties  {\rm (i)--(iii)}. We say that the two deformations are {\em $FRS$-equivalent}
	if there is a stratified homeomorphism $k: ({\mathbb R}^m_1,(S_1,0)) \to  ({\mathbb R}^m_2,(S_2,0))$ such that all pairs of curves $\gamma_s$ and $\eta_{k(s)},$ $s \in ({\mathbb R}^m_1,0),$ satisfy properties {\rm (i)--(iii)}.
\end{definition} 

Basically, $FRS$-equivalence means the {\rm instantaneous} configurations of the curves  $\gamma_s$ and $\eta_{k(s)}$
(including their singularities and their geometries) are the same. Note that in the Definition \ref{def:main} the authors of \cite{SalariTari} has been used the notation $({\mathbb R}^m_i,0)$ instead of $({\mathbb R}^m,0)$ to point out which family is considered. 

Following the same idea as mentioned in \cite{SalariTari}, in this paper we deal with space curves at flattenings, vertices and twisting points (\S \ref{Sec:regular}) and with the 
singularities (\S \ref{Sec:singular}).

An $\mathcal A$-model for the space cusp curve is $(t^2,t^3,0)$ and a 
model of an $\mathcal A_e$-versal deformation 
of this singularity is $(t^2,t^3+ut,vt),(u,v)\in (\mathbb R^2,0)$.

The main result of this work is the Theorem \ref{theo:main}, where we study the bifurcation of a $FRS$-generic family of space cusps.
   
\section{Preliminaries}\label{sec:pre}
Let the function $f:J\to \mathbb R$ be smooth where $J$ is an open interval, 
we say that $f$ is singular at $t_0\in I$ if $f'(t_0)=0$. 
As we consider here local phenomena so we can assume $t_0=0$. 
The group $\mathcal R$ is the group of local changes of the variable in the source that 
fix $t=0$. The models for the local $\mathcal R$-singularities of functions are 
$\pm t^{k+1}$, $k\ge 1$, and these are known as $A_k$-singularities. 
The necessary and sufficient conditions for a function $f$ to have an $A_k$-singularity at $t=0$ are
$$
f'(0)=f''(0)=\ldots=f^{(k)}(0)=0,\, f^{(k+1)}(0)\ne0.
$$

A family of germs of functions $F:(\mathbb R\times \mathbb R^m,(0,0))\to \mathbb R$ with $F(t,s)=F_s(t)$ and $F_0(t)=f(t)$ 
is said to be an $\mathcal R^+$-{\it versal deformation} of the $A_k$-singularity of $f$ at $t=0$ if any other deformation $G$ of  
$f$ can be written as $G(t,u)=F(h(t,u),k(u))+c(u)$, 
for some germs of smooth functions $c$ and $h$ with $h(t,0)=t+O(t^2)$ 
and of a smooth map $k$. 
Let us denote the $\frac{\partial F}{\partial s_i}(t,0)$
by $\dot{F}_i(t)$. Then 
the family $F$ is an $\mathcal R^+$-versal deformation 
of $F_0=f,$ if and only 
if  
$
\mathcal E\langle f'\rangle+\mathbb R\langle1,\dot{F}_1,\ldots,\dot{F}_m\rangle
=\mathcal E,
$ 
where $\mathcal E$ is the ring of germs of smooth functions $(\mathbb R,0)\to \mathbb R$.
A model of an $\mathcal R^+$-versal deformation of the $A_k$-singularity $\pm t^{k+1}$ is 
$F(t,s)=\pm t^{k+1}+s_{k-1}t^{k-1}+\ldots+s_1t$.

Another important concept that we deal with here is the group of diffeomorphisms $\mathcal A$. Two
smooth maps $h$ and $g$ from $\mathbb R^n$ to $\mathbb R^m$  are said to be ${\cal A}$-{\it equivalent} if there are diffeomorphisms, $l: \mathbb R^n \to \mathbb R^n$ and $ k: \mathbb R^m\to \mathbb R^m$, such that $k\circ h=g\circ l.$  

A space curve $\gamma\in C^{\infty}(I,\mathbb{R}^3)$ is said to be generic if it is projection-generic (see \cite{David}) and satisfies the following conditions:
\begin{itemize}
	\item[(0)] If $\tau(t)=0$ for some $t$ then $\tau'(t)\neq 0$.
	\item[(1)] Assume that the secant line $l$ to $\gamma$ at two points $\gamma(t_i)$, $i = 1, 2$, is contained in the osculating planes $\mathcal{O}(t_i)$ for any $i = 1, 2$. Then $\tau(t_i)\neq 0$ for any $i = 1, 2$.
	\item[(2)] Let $l$ be a cross tangent to $\gamma$ at $\gamma(t_i)$ for any $i = 1,2$, tangent in $\gamma(t_1)$. Then $\gamma^{(4)}(t_1)\not\subset\mathcal{O}(t_1)$.
	\item[(3)] Let $l$ be a trisecant line to $\gamma$ at three points $\gamma(t_i)$, $i = 1, 2, 3$. If $l\subset\mathcal{O}(t_1)$, then $\tau(t_1)\ne 0$ and $l \not\subset \mathcal{O}(t_i)$ for any $i = 2,3$.
	\item[(4)] Let $l$ be a trisecant line to $\gamma$ at three points $\gamma(t_i)$, $i = 1, 2, 3$. If $l$ is contained in a bitangent plane $\pi,$ to $\gamma$ at two of these points, then $\pi$ does not osculate at any of the three points and $\gamma^{(3)}(t_i)$ is not contained in the bitangent plane.
	\item[(5)] Let $l$ be a quadrisecant line to $\gamma$ at four points $\gamma(t_i)$, $i = 1, 2, 3, 4$. Then, $l\not\subset\mathcal{O}(t_i)$ and $l||\gamma^{(3)}(t_i)$ at most in two points.
\end{itemize} 
Let $\gamma: \mathbb{R} \to \mathbb{R}^3$ be a generic space curve parametrized by arc-length parametrization.
The function $d_{\gamma} : \mathbb R\times \mathbb R^3 \to \mathbb R$ with $d_{\gamma}(t,x)=\frac{1}{2}\langle \gamma(t)-x,\gamma(t)-x\rangle$, is called distance squared function of $\gamma$. 

We say that the sphere $S^2(a,r)$, with center $a$ and radius $r$, has contact of order $k$ with $\gamma$ at $t_0$ if and only if 
\begin{align*}
d_{\gamma}(t_0,a)=r^2, \frac{\partial d_{\gamma}}{\partial t}(t_0,a)=\cdots=\frac{\partial^k d_{\gamma}}{\partial t^k}(t_0,a)=0 \,\text{and}\,\, \frac{\partial^{k+1} d_{\gamma}}{\partial t^{k+1}}(t_0,a)\ne 0. 
\end{align*}

Suppose that $\{T(t),N(t),B(t)\}$ be the Frenet frame at $t$ where $T(t)=\gamma'(t)$, $N(t)$ is the normal vector and $B(t)=T(t)\times N(t)$ is the binormal vector of $\gamma$ at $t$. The curvature and torsion of $\gamma$ at $t$ are denoted by $\kappa(t)$ and $\tau(t)$ respectively. 
We define the concept of generalized evolute of $\gamma$ as follow.
\begin{definition}\label{Def:genEvo}
For the space curve $\gamma$, the centers of all osculating hyperspheres of $\gamma$ form a smooth curve $c_{\gamma} : \mathbb R \to \mathbb R^3$ such that $c_{\gamma}(t)=\gamma(t)+\mu_1(t) N(t)+\mu_2(t)B(t)$. The curve $c_{\gamma}$ is called \textit{generalized evolute} or \textit{curve of spherical curvature centers} of $\gamma$ or \textit{focal curve} of $\gamma$.
\end{definition}
The coefficients $\mu_i$ in Definition \ref{Def:genEvo} are called focal curvatures and are rational functions of the curvature and torsion of $\gamma$ and their derivatives. Also the osculating hypersphere at $t$ has the radius equal $\sqrt{\mu_1^2+\mu_2^2}$ (see \cite{CarmenEster2} for more details).

According to \cite{CarmenEster2, Uribe} one can show that the curve $c_{\gamma}$ is singular if and only if the osculating hypersphere $S^2$ at each point $t$ of $\gamma$ has at least 4-point contact with $\gamma$. This means that the distance squared function $d_{\gamma}$ has $A_4$-singularity.
\begin{observation}
It is easy to prove that for a space curve $\gamma$, the generalized evolute of $\gamma$ is the curve
\begin{align}
c_{\gamma}(t)=\gamma(t)+\frac{1}{\kappa(t)} N(t)-\frac{\kappa '(t)}{\kappa^2(t)\tau(t)} B(t).
\end{align}
\end{observation}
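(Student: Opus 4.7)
The plan is to read off the coefficients $\mu_1$ and $\mu_2$ of Definition \ref{Def:genEvo} directly from the contact conditions that characterise the osculating hypersphere. By definition, the hypersphere centred at $c_{\gamma}(t)$ has contact of order at least $3$ with $\gamma$ at $t$, so the distance squared function $d_{\gamma}(\cdot, c_{\gamma}(t))$ satisfies $\partial_t d_{\gamma}=\partial^2_t d_{\gamma}=\partial^3_t d_{\gamma}=0$ at $t$. Writing $v(t)=\gamma(t)-c_{\gamma}(t)$ and using arc-length parametrisation, I would first compute these three derivatives using only $v'=T$ and the Frenet--Serret formulas $T'=\kappa N$, $N'=-\kappa T+\tau B$, $B'=-\tau N$.

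The first condition $\partial_t d_{\gamma}=\langle T,v\rangle=0$ tells us that $v$ has no $T$-component, which is consistent with the ansatz $c_{\gamma}-\gamma=\mu_1 N+\mu_2 B$ given in Definition \ref{Def:genEvo}. Next, expanding $\partial^2_t d_{\gamma}=\kappa\langle N,v\rangle+1=0$ yields $\langle N,v\rangle=-1/\kappa$, so $\mu_1=1/\kappa$. Finally, differentiating once more gives
\begin{equation*}
\partial^3_t d_{\gamma}=-\kappa^2\langle T,v\rangle+\kappa'\langle N,v\rangle+\kappa\tau\langle B,v\rangle=0,
\end{equation*}
and substituting the two previous relations leaves $\langle B,v\rangle=\kappa'/(\kappa^2\tau)$, hence $\mu_2=-\kappa'/(\kappa^2\tau)$. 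Combining these two values inside the expression $c_{\gamma}(t)=\gamma(t)+\mu_1 N+\mu_2 B$ yields the stated formula.

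There is no substantial obstacle; the computation is a direct unpacking of the osculating contact conditions and is only well defined when $\kappa\neq 0$ and $\tau\neq 0$, which is precisely the non-degenerate situation implicit in the genericity hypotheses of the paper (condition (0) guarantees the torsion has only simple zeros, away from which the formula is valid). The only care required is to keep the signs consistent, since writing $v=\gamma-c_{\gamma}$ versus $c_{\gamma}-\gamma$ flips the sign of the $B$-coefficient; this is exactly what produces the minus sign in front of $\kappa'/(\kappa^2\tau)$ in the final expression.
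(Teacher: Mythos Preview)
Your argument is correct and is the standard derivation: fixing the centre $x=c_{\gamma}(t_0)$ and imposing the three contact conditions $\partial_t d_{\gamma}=\partial_t^2 d_{\gamma}=\partial_t^3 d_{\gamma}=0$ together with the Frenet--Serret relations gives exactly $\mu_1=1/\kappa$ and $\mu_2=-\kappa'/(\kappa^2\tau)$, and your sign bookkeeping via $v=\gamma-c_{\gamma}$ is handled cleanly.

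As for comparison with the paper: there is nothing to compare, because the paper does not supply a proof. The statement is presented as an Observation with the phrase ``it is easy to prove'' and no further argument; the authors simply record the formula and use it downstream (e.g.\ in the proof of Proposition~\ref{Prop:flat}). Your write-up therefore fills in precisely the omitted computation, and it does so by the most natural route---reading off the focal curvatures from the osculating-sphere contact conditions, which is also the viewpoint taken in the references \cite{CarmenEster2,Uribe} the paper cites for this material.
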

\begin{definition}\label{Def:vertex}
A point $\gamma(t_0)$ where $c_{\gamma}'(t_0)=0$ and $\tau(t_0)\ne 0$ is called a \textit{vertex} of $\gamma$. 
\end{definition}  
In \cite{Uribe} it is shown that a point $p=\gamma(t_0)$ with $\tau(t_0)\ne 0$ is a vertex of $\gamma$ if and only if $\mu'_2+\mu_1 \tau =0$ at $t=t_0$. A general study about vertices and focal curvatures of space curves is given in \cite{Uribe}.

Another geometric phenomenon in study of space curves is \textit{flattening point}. A flattening point for the space curve $\gamma$ is a point $p=\gamma(t_0)$ such that $\tau(t_0)=0.$ 
Equivalently, $\gamma(t_0)$ is a flattening point if and only if $$\det(\gamma'(t_0),\gamma''(t_0),\gamma'''(t_0))=0.$$ These points captured by an $A_3$-singularity of height function along normal direction at $t_0$.
A \textit{bi-flattening} point of $\gamma$ is a point $p=\gamma(t_0)$ such that $\tau(t_0)=\tau'(t_0)=0$, we recall that a bi-flattening point is not generic.

A helix is a curve $\alpha:\mathbb R \to \mathbb R^3$ such that its tangent vector forms a constant angle with a given direction $v$ at $\mathbb R^3$.
\begin{proposition} [\cite{CarmenEster1}]
A curve $\alpha : \mathbb R \to \mathbb R^3$ is a helix if and only if the function $\det(\alpha''(t),\alpha'''(t),\alpha^{(4)}(t))$ is identically zero, where $\alpha^{(i)}$ represents the ith derivative of $\alpha$ with respects to arc-length.
\end{proposition}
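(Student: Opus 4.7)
The plan is to expand the derivatives of $\alpha$ in the Frenet frame $\{T,N,B\}$, compute the determinant explicitly, and then reduce the statement to the classical Lancret characterization: a space curve is a helix if and only if $\tau/\kappa$ is (locally) constant. Using the Frenet--Serret equations $T'=\kappa N$, $N'=-\kappa T+\tau B$, $B'=-\tau N$, I would first record
\[
\alpha''=\kappa N,\qquad \alpha'''=-\kappa^{2}T+\kappa'N+\kappa\tau B,
\]
\[
\alpha^{(4)}=-3\kappa\kappa'\,T+(\kappa''-\kappa^{3}-\kappa\tau^{2})\,N+(2\kappa'\tau+\kappa\tau')\,B.
\]
Since $\{T,N,B\}$ is a positively oriented orthonormal basis, the scalar triple product equals the determinant of the matrix of coefficients; the first row has only an $N$-entry, so the computation collapses to a $2\times 2$ determinant in the $T,B$ columns and yields
\[
\det(\alpha'',\alpha''',\alpha^{(4)})=\kappa^{3}(\kappa\tau'-\kappa'\tau)=\kappa^{5}\Bigl(\frac{\tau}{\kappa}\Bigr)'
\]
wherever $\kappa\ne 0$; on the set $\{\kappa=0\}$ the determinant vanishes trivially because $\alpha''=\kappa N=0$.

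For the direction \emph{$\alpha$ helix $\Rightarrow$ $\det\equiv 0$}, suppose $v$ is the fixed axis with $\langle T,v\rangle\equiv\cos\theta$. Differentiating once gives $\kappa\langle N,v\rangle=0$, so on any open set with $\kappa\ne 0$ one has $v\in\mathrm{span}(T,B)$, hence $v=\cos\theta\,T+\sin\theta\,B$; differentiating $\langle N,v\rangle=0$ once more yields $\kappa\cos\theta=\tau\sin\theta$, so $\tau/\kappa$ is locally constant and the determinant vanishes there. For the converse, if the determinant is identically zero then on each open interval where $\kappa\ne 0$ the ratio $\tau/\kappa$ equals a constant $\cot\theta$; setting $v=\cos\theta\,T+\sin\theta\,B$ and using Frenet gives
\[
v'=(\kappa\cos\theta-\tau\sin\theta)\,N=0,
\]
so $v$ is a constant unit vector with $\langle T,v\rangle=\cos\theta$ constant, showing that $\alpha$ is a helix on that interval.

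The step I expect to require the most care is the gluing across zeros of $\kappa$: the axis $v$ is constructed on each connected component of $\{\kappa\ne 0\}$, and one must verify that the resulting local axes coincide so that a single global $v$ serves along the whole curve. The extreme cases are easy (if $\kappa\equiv 0$ then $\alpha$ is a straight line and any direction is an axis; if $\kappa>0$ everywhere the single-interval argument suffices), so the real work is at isolated flattening points, where continuity of $T=\alpha'$ together with the constancy of $\langle T,v\rangle=\cos\theta$ should force the matching of the locally constructed $\theta$'s. This is the place where I would focus the most attention in a fully rigorous write-up.
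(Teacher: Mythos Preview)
The paper does not give a proof of this proposition at all: it is quoted verbatim from \cite{CarmenEster1} and used as a black box, so there is no in-paper argument to compare your proposal against.

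That said, your approach is the standard and correct one. The Frenet expansions of $\alpha'',\alpha''',\alpha^{(4)}$ are computed correctly, and the identity
\[
\det(\alpha'',\alpha''',\alpha^{(4)})=\kappa^{3}(\kappa\tau'-\kappa'\tau)=\kappa^{5}\bigl(\tau/\kappa\bigr)'
\]
follows exactly as you indicate. Both directions then reduce to Lancret's theorem, which you reprove cleanly. The only genuine subtlety is the one you already isolate: patching the locally constructed axes across points where $\kappa$ vanishes. Your continuity argument via $\langle T,v\rangle=\cos\theta$ is the right idea, but note that at an isolated zero of $\kappa$ the ratio $\tau/\kappa$ (and hence $\theta$) need not extend continuously, and the Frenet frame itself is undefined there; a careful version of the argument should instead pass the \emph{vector} $v$ across the zero (it is constant on each side, and $T$ is continuous, so $\langle T,v\rangle$ matches), or simply work under the standing hypothesis $\kappa>0$ that is implicit whenever one speaks of the Frenet frame of a space curve. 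With that caveat, your write-up is a complete proof of the cited result.
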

\begin{definition}\label{Def:Twisting}
If $\gamma: \mathbb R \to \mathbb R^3$ is parametrized by its arc-length $t$, then a point $p=\gamma(t_0)$ is a twisting (or Darboux vertex) of $\gamma$ if it is a critical point of function $\frac{\tau}{\kappa}$. This means that 
\[
\tau'(t_0)\kappa(t_0)-\kappa'(t_0)\tau(t_0)=0.
\]
\end{definition}
\begin{proposition}[\cite{CarmenEster1}]
Given a space curve $\gamma$ parametrized by arc-length in $\mathbb R^3$, a point $t_0$ is a twisting of $\gamma$ if and only if there exists some helix whose order of contact with $\gamma$ at $t_0$ is at least 4.
\end{proposition}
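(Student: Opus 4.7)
The plan is to prove both directions of the equivalence by a direct Taylor-expansion comparison of $\gamma$ against candidate helices $\alpha$ at $t_0$, using the Frenet-Serret apparatus together with Lancret's characterization that a smooth curve is a helix exactly when $\tau/\kappa$ is constant (equivalent, by the previous Proposition, to $\det(\alpha'',\alpha''',\alpha^{(4)})\equiv 0$). Two arc-length parametrized curves have contact of order $\geq 4$ at a common point if and only if their first four derivatives agree there, so the proof reduces to a jet-matching computation.

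First I would record the derivatives of any unit-speed curve in its moving Frenet frame. An immediate application of the Frenet-Serret formulas gives
\begin{align*}
\gamma'(t) &= T,\\
\gamma''(t) &= \kappa N,\\
\gamma'''(t) &= -\kappa^{2}T + \kappa' N + \kappa\tau B,\\
\gamma^{(4)}(t) &= -3\kappa\kappa' T + (\kappa'' - \kappa^{3} - \kappa\tau^{2}) N + (2\kappa'\tau + \kappa\tau') B,
\end{align*}
and the analogous formulas with $\kappa,\tau$ replaced by $\kappa_\alpha,\tau_\alpha=c\,\kappa_\alpha$ apply to a helix $\alpha$ with Lancret constant $c$. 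Fix a helix $\alpha$ whose Frenet frame at $t_0$ coincides with that of $\gamma$. Matching $\gamma''(t_0),\gamma'''(t_0)$ with $\alpha''(t_0),\alpha'''(t_0)$ forces $\kappa_\alpha(t_0)=\kappa(t_0)$, $\kappa_\alpha'(t_0)=\kappa'(t_0)$ and $\tau_\alpha(t_0)=\tau(t_0)$; in particular $c=\tau(t_0)/\kappa(t_0)$, which is well defined since $\kappa(t_0)\neq 0$ at a regular, non-flattening point. Using $\tau_\alpha'=c\,\kappa_\alpha'$, the $B$-component of $\alpha^{(4)}(t_0)$ becomes
\[
2\kappa'(t_0)\tau(t_0)+\kappa(t_0)\cdot\frac{\tau(t_0)}{\kappa(t_0)}\kappa'(t_0)=3\kappa'(t_0)\tau(t_0),
\]
so matching the $B$-component of $\gamma^{(4)}(t_0)$, namely $2\kappa'\tau+\kappa\tau'$, yields $\kappa(t_0)\tau'(t_0)-\kappa'(t_0)\tau(t_0)=0$, which is exactly the twisting condition of Definition \ref{Def:Twisting}. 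The $N$-component merely pins down $\kappa_\alpha''(t_0)=\kappa''(t_0)$ and imposes no additional constraint on $\gamma$.

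For the converse, assume $t_0$ is a twisting point. Choose any smooth $\kappa_\alpha(t)$ whose $2$-jet at $t_0$ equals that of $\kappa$ (e.g.\ the Taylor polynomial of degree two), set $\tau_\alpha=c\,\kappa_\alpha$ with $c=\tau(t_0)/\kappa(t_0)$, and invoke the fundamental theorem of space curves to produce a curve $\alpha$ with this curvature and torsion, then translate and rotate it so that its Frenet frame at $t_0$ matches that of $\gamma$; by Lancret's theorem $\alpha$ is a helix and, by the jet-matching performed above, contact of order $\geq 4$ is achieved. I expect the main bookkeeping obstacle to be the systematic computation of the fourth-order Frenet expansion; once those coefficients are in hand the matching is essentially algebraic, the key and only nontrivial condition being the $B$-component identity that collapses to the twisting equation.
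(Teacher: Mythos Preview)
The paper does not supply its own proof of this proposition; it is quoted verbatim from \cite{CarmenEster1} and left unproved. So there is nothing in the paper to compare against, and your proposal should be judged on its own merits.

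Your jet-matching argument via the Frenet--Serret expansions is correct and is in fact the standard way to prove this statement. The fourth-derivative formula
\[
\gamma^{(4)} = -3\kappa\kappa'\,T + (\kappa'' - \kappa^{3} - \kappa\tau^{2})\,N + (2\kappa'\tau + \kappa\tau')\,B
\]
is right, and the crucial observation that the $T$- and $N$-components of $\gamma^{(4)}=\alpha^{(4)}$ impose no constraint on $\gamma$ (they only pin down $\kappa_\alpha'$ and $\kappa_\alpha''$), while the $B$-component reduces exactly to $\kappa\tau'-\kappa'\tau=0$, is the heart of the matter. Your converse construction---prescribe the $2$-jet of $\kappa_\alpha$ at $t_0$ to match that of $\kappa$, set $\tau_\alpha=(\tau(t_0)/\kappa(t_0))\kappa_\alpha$, and invoke the fundamental theorem of curves plus a rigid motion---is precisely what is needed, and Lancret's theorem guarantees the resulting curve is a generalized helix in the sense used in the paper.

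One small presentational point: in the forward direction you write ``fix a helix $\alpha$ whose Frenet frame at $t_0$ coincides with that of $\gamma$'' as if this were an additional hypothesis. It is not: once contact of order $\geq 2$ is assumed, $\gamma'(t_0)=\alpha'(t_0)$ forces $T_\alpha=T$, and $\gamma''(t_0)=\alpha''(t_0)$ forces $\kappa_\alpha N_\alpha=\kappa N$, hence $N_\alpha=N$ and $B_\alpha=B$ automatically. Making this explicit would remove any appearance of an unjustified assumption.
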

\section{Regular space curves}\label{Sec:regular}
We devote this section to study the $FR$-deformations of a regular space curve.

Suppose that $\gamma$ has a flattening at $t_0$. 
To study the deformations of the flattening, we use the family of height function $\tilde H$ on $\gamma_s$. If it is an 
$\mathcal R^+$-versal deformation, then we have a well studied model of the deformation of the flattening.

Without loose of generality, one can assume that $t_0=0$, $T(0)=(1,0,0)$, $N(0)=(0,1,0)$ and $B(0)=(0,0,1)$. Applying the Serret-Frenet formulae and using the Taylor expansion of $\gamma$ we obtain
\begin{align}\label{gamma}
\gamma(t)=\left( \begin{array}{c}
t+a_3t^3+a_4t^4+O(t^5)\\
b_2 t^2+b_3 t^3+b_4 t^4+ O(t^5)\\
c_3 t^3+c_4 t^4+ O(t^5)  \end{array} \right),
\end{align}
where $b_2>0$. 

Using the expression $\tau(t)=\frac{\langle \gamma'\times\gamma'' , \gamma'''\rangle}{\langle \gamma'\times\gamma'',\gamma'\times\gamma''\rangle}$ one can easily obtain $\tau(0)=b_2c_3$. This means that at a flattening point of $\gamma$ we have $c_3=0$. 

\begin{proposition}\label{Prop:flat}
If the space curve $\gamma$ has a flattening point at $p$ then the generalized evolute goes to infinity asymptotically along the binormal direction of $\gamma$ at $p$ and can be modeled in some coordinate system by $xyz^3=1$ (see Figure \ref{fig: evflatt}).   
\end{proposition}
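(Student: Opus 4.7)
The plan is to Taylor-expand the evolute $c_\gamma(t)$ in the Frenet frame at the flattening point (chosen as $t_0=0$) and then read off both the asymptotic direction and the algebraic model from the leading orders of the three coordinate functions.

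First, I would use that at a flattening $\tau(0)=0$ (equivalently $c_3=0$ in (2.2)) and, by genericity, $\tau'(0)\neq 0$ and $\kappa'(0)\neq 0$. Writing $\mu_1=1/\kappa$ and $\mu_2=-\kappa'/(\kappa^2\tau)$, one immediately sees that $\mu_1$ stays bounded while $\mu_2(t)\sim C/t$ with $C=-\kappa'(0)/(\kappa(0)^2\tau'(0))\neq 0$. Since $B'(0)=-\tau(0)N(0)=0$, we have $B(t)=B(0)+O(t^2)$, so the binormal component of $c_\gamma(t)$ is dominated by $\mu_2(t)B(0)$ and blows up as $t\to 0$, while the tangent and normal components stay bounded. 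This gives the asserted asymptote along $B(0)$.

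Second, to extract the model $xyz^3=1$, I would compute the leading orders of the three Frenet components more carefully. The crucial step is a cancellation in the tangent direction: $\gamma_T(t)=t+O(t^3)$ and $(\mu_1 N)_T(t)=-t+O(t^2)$ (since $N_T(t)=-\kappa(0)t+O(t^2)$), so the linear contributions annihilate and the tangent component starts at order $t^2$ with a nonzero coefficient $\alpha\propto \kappa'(0)/\kappa(0)$. The normal component equals $1/\kappa(0)+O(t)$, so after the translation $y\mapsto y-1/\kappa(0)$ it becomes $\beta t+O(t^2)$ with $\beta\neq 0$, and the binormal component remains $C/t+O(1)$. Hence in the translated Frenet basis at $0$ the evolute has the parametric form
\begin{equation*}
c_\gamma(t) = \bigl(\alpha t^2+O(t^3),\; \beta t+O(t^2),\; C/t+O(1)\bigr),
\end{equation*}
and the exponents conspire: $\alpha t^{2}\cdot \beta t\cdot (C/t)^{3}=\alpha\beta C^{3}$, a nonzero constant. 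Thus the curve lies, to leading order, on the surface $XYZ^3=\alpha\beta C^3$, and a diagonal rescaling $X=\lambda_1 x$, $Y=\lambda_2 y$, $Z=\lambda_3 z$ with $\lambda_1\lambda_2\lambda_3^3=\alpha\beta C^3$ normalizes this to the claimed $xyz^3=1$.

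I expect the main obstacle to be the tangent-direction cancellation: verifying that $\gamma$ and $\mu_1 N$ annihilate each other at linear order and then tracking the residual $t^2$ coefficient (with contributions from $\mu_1 N$ and from $\mu_2 B$ via $B_T(t)=O(t^3)$) to confirm it is nonzero. Without this cancellation the product $XYZ^3$ would not be bounded, and the clean model $xyz^3=1$ would fail; the rest of the argument (the binormal blow-up and the diagonal rescaling) is essentially bookkeeping.
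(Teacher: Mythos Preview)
Your approach is essentially the paper's: both Taylor-expand the evolute in the Frenet frame at the flattening, obtain leading orders $(\alpha t^{2},\,\beta t,\,C/t)$ for the (tangent, shifted-normal, binormal) components, and read off the model $xyz^{3}=1$ from the exponent relation $2+1-3=0$. The paper does this with the explicit Taylor coefficients $a_i,b_i,c_i$ of $\gamma$ rather than with $\kappa,\tau$, but the computation and its one delicate step---the tangent-direction cancellation you correctly flag, which makes $\alpha\propto\kappa'(0)/\kappa(0)$ and hence tacitly requires $\kappa'(0)\neq 0$ (equivalently $b_3\neq 0$) in both arguments---are identical.
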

\begin{proof}
At a flattening point $\tau=0$. We parametrize the space curve $\gamma$ by \eqref{gamma} such that $a_3<0$, $b_2>0$ and $c_3=0$. We have
 $$\kappa(t)=\frac{2b_2+6b_3t+(-6a_3b_2+12b_4)t^2+O(t^3)}{(1+(3a_3+2b_2^2)t^2+O(t^3))^{\frac{3}{2}}}$$ and $$\tau(t)=\frac{12b_2c_4t+18c_4b_3 t^2+O(t^3)}{b_2^2+6b_2b_3 t +O(t^2)}.$$ Also the normal and binormal vectors are $$N(t)=(-2b_2t+O(t^2),1-2b_2^2t^2+O(t^3),\frac{6c_4}{b_2}t^2+O(t^3))$$ and $$B(t)=(8c_4t^3+O(t^4),\frac{-6c_4}{b_2}t^2+O(t^3), 1-\frac{18c_4^2}{b_2^2}t^4+O(t^5)).$$ 

The generalized evolute of $\gamma$ is parametrized by 
\begin{align}\label{evolute flat}
c_{\gamma}(t) & = \gamma(t)+\frac{1}{\kappa(t)} N(t)-\frac{\kappa '(t)}{\kappa^2(t)\tau(t)} B(t) \nonumber \\
              & = \left(\frac{b_3}{2b_2} t^2+O(t^3), \frac{1}{2b_2} -\frac{3b_3}{4b_2^2} t+O(t^2), \frac{b_3}{-8c_4b_2 t}(1+O(t))\right).
\end{align}
Clearly, the expression \eqref{evolute flat} in an appropriate coordinate system can be written in the form  $xyz^3=1$. 
\end{proof}

\begin{remark}
Note that height function has an $\mathcal{A}_3$-singularity at $p=\gamma(0)$ if and only if $p$ is a flattening point of $\gamma$. In fact we have
 
\begin{tabular}{ccl}
$H'(0)=0$ & $\iff$ &   $w=\eta_1 N(0) + \eta_2 B(0),$ \\
$H'(0)=H''(0)=0$ & $\iff$ &  $w=\eta_1 N(0) + \eta_2 B(0),$\\
 & &			$\eta_1=0.$ \\

$H'(0)=H''(0)=H'''(0)=0$ & $\iff$ &  $w=\eta_1 N(0) + \eta_2 B(0),$\\
 & &$\eta_1=0,$\\
& & $\tau(0)=0.$\\
$H^{(4)}(0)=\kappa(0)\tau'(0)\ne 0.$ & & 
\end{tabular}

\end{remark}
\begin{definition}\label{def:FR flattening}
 We say that a deformation of a regular space curve $\gamma$ at a flattening point is $FR$-generic if the associated family of height functions is an $\mathcal{R}^+$-versal deformation of a singularity of the height function on $\gamma$ along its bi-normal direction.
 \end{definition}
As the flattening points are stable therefore we have the following Theorem about $FR$-model at a flattening point.
 \begin{theorem}
Any $FR$-generic stable family of curves at a flattening point is $FR$-equivalent to the model $(t,t^2,t^4)$.
 \end{theorem}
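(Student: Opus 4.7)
My plan is to exploit the Taylor normalization already set up around equation \eqref{gamma} together with the classification of the $A_3$-singularity of the height function in the binormal direction recorded in the preceding remark. First I would observe that the hypothesis ``$FR$-generic stable at a flattening'' forces the binormal height function to be $\mathcal{R}^+$-equivalent to $t^4$: the flattening condition gives $\tau(0)=0$, the stability/genericity forces $\tau'(0)\neq 0$, and hence $H^{(4)}(0)=\kappa(0)\tau'(0)\neq 0$. In the coordinates of \eqref{gamma} this is exactly the statement $c_3=0$, $c_4\neq 0$ (combined with $b_2>0$ from the normal form).

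Next, because $FR$-equivalence allows reparametrizations of the source and Euclidean motions and linear rescalings of the target (each of which preserves diffeomorphism class, flattenings, vertices, and twistings), I would use these freedoms in three steps. Step one: a smooth change of parameter $t\mapsto \varphi(t)$ with $\varphi(0)=0$, $\varphi'(0)=1$, chosen to kill the higher-order terms in the first coordinate so that the first component becomes exactly $t$. Step two: successive target affine changes to eliminate the $t^3$ and $t^4$ corrections in the second coordinate and the $t^3$ term in the third, and to rescale so that the leading coefficients of the second and third components are both $1$. This already brings the $3$-jet to the model $(t,t^2,t^4)$.

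Step three, removing the residual $O(t^5)$ tails, is the main obstacle. For this I would invoke that $A_3$ is a finitely $\mathcal{R}^+$-determined singularity, so the binormal height function of the normalized $\gamma$ is $\mathcal{R}^+$-equivalent to the binormal height function of $(t,t^2,t^4)$; by projection along this direction the two curves coincide up to an ambient diffeomorphism of the target. One then checks that no new flattening, vertex, or twisting appears near $t=0$ in either curve (both have an isolated flattening at the origin, no nearby vertex since $c_\gamma$ escapes to infinity along the binormal by Proposition \ref{Prop:flat}, and $\tau'/\kappa-\kappa'\tau$ is nonzero nearby), so properties (i)--(iii) of the $FR$-analog of Definition \ref{def:main} are automatically matched.

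Finally I would package the three steps into a single stratified homeomorphism on the trivial base, which by construction induces the required $FR$-equivalence with $(t,t^2,t^4)$. The only delicate point — and where I expect the referee to press — is making precise what ``$FR$-equivalence'' means for regular space curves (the analogue of Definition \ref{def:main} for the $FR$ setting), and then verifying that the ambient target diffeomorphism produced by finite determinacy respects that notion; this reduces to noting that stability of the flattening implies there are no competing geometric features in a neighborhood of $p$ that could obstruct the matching.
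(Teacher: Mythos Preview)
Your proposal misreads what $FR$-equivalence demands and, as a consequence, does far more work than needed while introducing operations that are not legitimate in this setting. Definition~\ref{def:main} (and its $FR$-analogue for regular curves) does \emph{not} ask you to transform $\gamma$ into the model by changes of variable; it only asks that, over a stratified homeomorphism of the base, the corresponding curves be diffeomorphic and carry the same numbers and relative positions of flattenings, vertices and twistings. Your Steps~2 and~3 therefore miss the point: ``target affine changes'' and ``rescalings'' are not Euclidean motions, and the ambient diffeomorphism you extract from $\mathcal R^+$-finite determinacy of the binormal height function is precisely the kind of $\mathcal A$-move the $FR$-theory forbids (it destroys the geometry). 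In fact, $\mathcal R^+$-equivalence of one height function does not give you any ambient diffeomorphism of $\mathbb R^3$ at all, so the sentence ``by projection along this direction the two curves coincide up to an ambient diffeomorphism of the target'' is not justified.

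The paper's argument is much shorter and stays inside the definition. A flattening is $\mathcal A$-codimension~$0$ (stable), so the stratification of the base is trivial and the ``family'' aspect is vacuous. All the geometric computations carried out before the theorem (Proposition~\ref{Prop:flat}, the $A_3$ height-function remark) depend only on the conditions $c_3=0$, $c_4\ne 0$ in the normal form \eqref{gamma}; hence any curve satisfying these conditions has the same local $FR$-configuration (one ordinary flattening, no nearby vertex or twisting) as the model $(t,t^2,t^4)$, which manifestly satisfies $c_3=0$, $c_4\ne 0$. That is the entire proof. Your final paragraph, where you check properties (i)--(iii) directly, is in fact the whole argument; the preceding three steps should be deleted.
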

\begin{proof}
 The above calculations depend only on the fact that the space curve has a flattening point at the origin (i.e. $c_3=0$ and $c_4\ne 0$). Using the Definition \ref{def:FR flattening} we conclude that the curve $(t,t^2,t^4)$ satisfies these conditions and has $\mathcal{A}$-codimention 0.  
 
\end{proof}
\begin{figure}[h]
\centering
\includegraphics[width=0.4\textwidth]{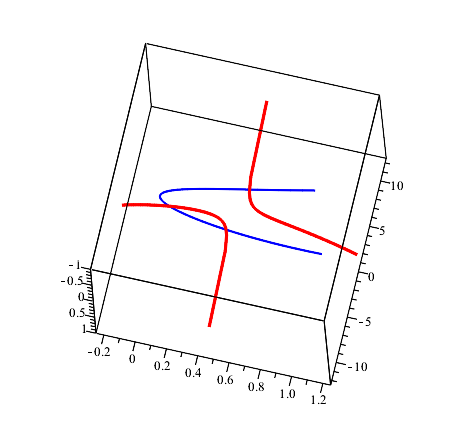}
\caption{The generalized evolute (red curve) at a flattening point. Evolute goes to infinity along the binormal direction.} \label{fig: evflatt}
\end{figure}


 Away from flattening points of $\gamma$, the deformations of the curve at a vertex point $t_0=0$ of finite order can be studied using the family of distance squared functions $\tilde{D}(t,q,s)$. If the family $\tilde{D}$ on $\gamma_s$ with $\gamma_0$ as in \eqref{gamma} is an $\mathcal R^+$-versal deformation of the singularity of $\tilde{D}(t,0,0)$ at origin, then we say the family $\gamma_s$ is $FR$-generic.
 
  By a direct calculation one can obtain the parametrization of the generalized evolute away from flattening points. In this case $c_3\ne 0$ and a vertex happens at $t_0=0$ if $b_2^3c_3+2a_3b_2c_3+b_3c_4-b_4c_3=0.$ Equivalently we have $\kappa''(0)=\frac{2\kappa'(0)^2}{\kappa(0)}+\frac{\kappa'(0)\tau'(0)}{\tau(0)}+\kappa(0)\tau^2(0)$. To be more precise, the generalized evolute at a vertex point has the following parametrization:
  \begin{align}\label{general ev}
    c_{\gamma}(t)=\left(\bar{a}_4 t^4+O(t^5), \bar{b}_0+\bar{b}_3 t^3 +O(t^4), \bar{c}_0 +\bar{c}_2 t^2 +O(t^3)\right),
    \end{align}  
   where
   \begin{align*}
   \bar{a}_4 = & \frac{3(8b_2^2b_3c_3-3a_3b_3c_3+10a_4b_2c_3+5b_3c_5-5b_5c_3)}{2c_3b_2}, \\
   \bar{b}_0 = & \frac{1}{2b_2},\\
   \bar{b}_3 = & \frac{-34b_2^2b_3c_3+9a_3b_3c_3-40a_4b_2c_3-20b_3c_5+20b_5c_3}{2c_3b_2^2}  ,\\
   \bar{c}_0 = & \frac{-b_3}{2b_2c_3},\\
   \bar{c}_2 = & \frac{18b_2^2b_3c_3-3a_3b_3c_3+20a_4b_2c_3+10b_3c_5-10b_5c_3}{2c_3^2b_2}.
   \end{align*}
  
 

 The generalized evolute at a vertex point $p$ of $\gamma$ is given in Figure \ref{fig: evVertex}.
\begin{figure}[h]
\centering
\includegraphics[width=0.4\textwidth]{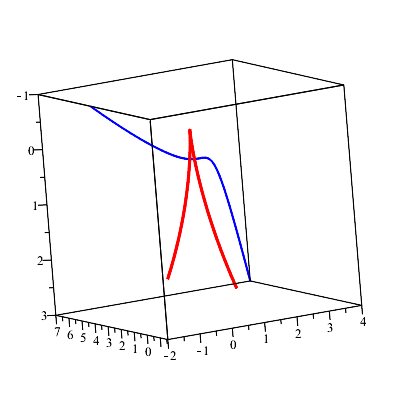}
\caption{The generalized evolute (red curve) at a vertex point. } \label{fig: evVertex}
\end{figure}

As we mentioned in Definition \ref{Def:Twisting} another interesting geometric phenomenon in study of space curves are twisting points. The twisting points are flattening points of tangent indicatrix of $\gamma$ i.e. the map $\gamma_T : \mathbb R \to \mathbb R^3$ with $\gamma_T(t)=\gamma'(t)$ (see Definition \ref{Def:Twisting}). More precisely, 
if $H(t) = \langle T(t),w\rangle,$ be the height function of $\gamma_T$ along the vector $w$, then we have
	
	\begin{tabular}{lcl}
		$H'(0)= 0$ & $\iff$ & $w=\lambda_1 T(0)+\lambda_2 B(0),$\\
		&&\\
		$H'(0)=H''(0)=0$ & $\iff$ & $w=\lambda_1 T(0)+\lambda_2 B(0),$ \\
		& & $\lambda_1 = \frac{\tau(0)}{\kappa(0)} \lambda_2,$\\
		&& \\
		$H'(0)=H''(0)=H'''(0)=0$ & $\iff$ & $w=\lambda_1 T(0)+\lambda_2 B(0),$\\
		& & $\lambda_1 = \frac{\tau(0)}{\kappa(0)} \lambda_2,$\\
		& & $\kappa(0)\tau'(0)-\kappa'(0)\tau(0)=0.$ 
	\end{tabular}

Therefore the twisting points of $\gamma$ capture by considering the singularities of height function of tangent indicatrix of $\gamma$ along a rectifynig vector at $\gamma(t)$. 

We may say a family of space curves $\gamma_s(t)$ at a twisting point $p=\gamma(t_0)$ is $FR$-generic if the family of height functions of its indicatrix be $\mathcal R^+$-versal.

\begin{proposition}
If the space curve $\gamma$ has a twisting at $t=0$ then its generalized evolute has a twisting at $t=0$ as well.
\end{proposition}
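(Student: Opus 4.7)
The plan is to exploit the characterization of twistings as flattenings of the tangent indicatrix, stated just before the proposition, combined with the simple fact that the tangent to the generalized evolute always points along the binormal of the original curve.

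First, using the Serret--Frenet formulae, I would differentiate $c_\gamma = \gamma + \mu_1 N + \mu_2 B$ and expand $c_\gamma'$ in the frame $\{T,N,B\}$ of $\gamma$:
\[
c_\gamma' \;=\; (1-\mu_1\kappa)\,T + (\mu_1'-\mu_2\tau)\,N + (\mu_1\tau+\mu_2')\,B .
\]
Substituting $\mu_1 = 1/\kappa$ and $\mu_2 = -\kappa'/(\kappa^2\tau)$, a direct check shows that the first two coefficients are identically zero, so
\[
c_\gamma' \;=\; g\,B, \qquad g:=\mu_1\tau+\mu_2' .
\]
One can verify that the equation $g=0$ is exactly the vertex condition already derived, so away from a vertex of $\gamma$ we have $g(0)\neq 0$ and $c_\gamma$ is regular at $t=0$. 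Its unit tangent is then $\pm B$, and consequently the tangent indicatrix of $c_\gamma$ coincides, up to a locally constant sign, with the binormal curve $t\mapsto B(t)$ of $\gamma$.

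Since flattening of a space curve is the vanishing of the triple product of its first three derivatives---a condition independent of parametrization---the proposition reduces to showing $\det(B',B'',B''')(0)=0$ under the twisting hypothesis $\kappa\tau'-\kappa'\tau=0$. A short computation using only the Frenet equations, expressing $B'$, $B''$ and $B'''$ in the basis $\{T,N,B\}$ and taking the triple product, gives
\[
\det(B',B'',B''') \;=\; -\,\tau^{3}\,(\kappa\tau'-\kappa'\tau) .
\]
At a twisting point of $\gamma$ the right-hand side vanishes (here $\tau(0)\neq 0$ is automatic, since at a flattening the evolute is not even well defined); hence $B$ has a flattening at $t=0$, so does the tangent indicatrix of $c_\gamma$, and therefore $c_\gamma$ has a twisting at $t=0$.

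I do not foresee any serious obstacle: the entire argument is driven by the compact identity $c_\gamma' = g\,B$, after which the result reduces to a one-line Frenet computation. The only points requiring a word of care are the mild regularity conditions implicit in the statement ($\tau(0)\neq 0$ so that $\mu_2$ and $c_\gamma$ are defined, and $g(0)\neq 0$ so that $c_\gamma$ is regular with a well-defined tangent indicatrix), both of which hold at a generic twisting.
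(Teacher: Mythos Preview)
Your argument is correct and takes a genuinely different route from the paper. The paper proceeds by brute force: it fixes the local form \eqref{gamma}, writes out the Taylor expansion of $c_\gamma$ at a twisting point, computes $\kappa_c$ and $\tau_c$ explicitly, and then (with Maple) checks that $\kappa_c'\tau_c-\tau_c'\kappa_c$ has no constant term. Your proof is coordinate-free: the identity $c_\gamma'=gB$ reduces the tangent indicatrix of $c_\gamma$ to the binormal curve of $\gamma$, and the single Frenet computation $\det(B',B'',B''')=-\tau^{3}(\kappa\tau'-\kappa'\tau)$ finishes the job. This is considerably cleaner and explains \emph{why} the result holds, whereas the paper's calculation merely verifies it; your approach also makes transparent that the converse holds (a twisting of $c_\gamma$ at a non-vertex, non-flattening point forces a twisting of $\gamma$). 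What the paper's computation buys, on the other hand, is the explicit leading coefficient $\tilde\delta$ of $\kappa_c'\tau_c-\tau_c'\kappa_c$, which tells you when the evolute's twisting is nondegenerate; your intrinsic argument does not immediately yield this. The mild genericity assumptions you flag ($\tau(0)\ne 0$ and $g(0)\ne 0$) match the paper's implicit hypotheses (their $\delta\ne 0$ plays the role of your $g(0)\ne 0$).
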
  
\begin{proof}
For the space curve $\gamma$ given in \eqref{gamma} it is not difficult to show that at a twisting point the generalized evolute has the following parametrization.
\[
c_{\gamma}(t)=(-\frac{\delta}{6b_2^2} t^3+O(t^4),\frac{\delta}{4b_2^3} t^2+O(t^3),-\frac{b_3}{2b_2c_3}-\frac{\delta}{b_2^2c_3} t +O(t^2)),
\] 
where $\delta=4b_2^4+12b_2b_4-27b_3^2$ is not equal zero generically. We denote the curvature and torsion of the generalized evolute by $\kappa_{c}$ and $\tau_{c}$ respectively. We have
\begin{align*}
\kappa_c(t) =  \frac{18 b_2 c_3^2}{|\delta|}+O(t), \quad  
\tau_c(t) =  - \frac{12c_3 b_2^3}{\delta} + O(t).
\end{align*}
Then by using the Maple package one can obtain that 
$$\kappa'_c\tau_c-\tau'_c\kappa_c=\pm \frac{216 b_2^2c_3^2 \tilde{\delta}}{\delta^2} t + O(t^2),$$
where \[\tilde{\delta}=12b_2^4c_3-20b_2^2c_5+48b_2b_4c_3+27b_3^2c_3+27c_3^3.\]
Therefore the generalized evolute has a twisting at $t=0$.
\begin{figure}[h]
\centering
\includegraphics[width=0.4\textwidth]{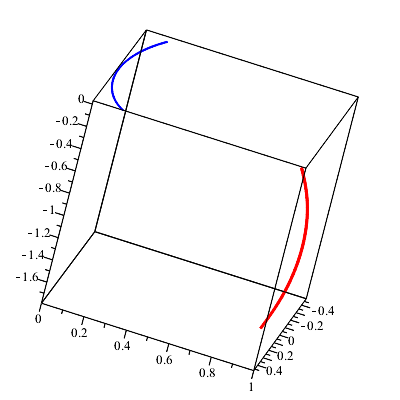}
\caption{The generalized evolute (red curve) at a twisting point. } \label{fig: evTwist}
\end{figure}
\end{proof}
\section{Singular space curves}\label{Sec:singular}
 In \cite{gibhob} C. G. Gibson and C. A. Hobbs has classified simple singularities of space curves. In this paper we will study $FRS$-deformations of singular space curves with $\mathcal{A}$-codimension $\leq 3$ where it is shown in \cite{gibhob} that there is just one orbit $\mathcal{A}$-equivalent to $(t^2,t^3,0)$ with $\mathcal{A}$-codimension 2 for such space curves. We call a space curve $\mathcal{A}$-equivalent to $(t^2,t^3,0)$ a \textit{space cusp}. 
 
 Next proposition is an extension of Proposition 2.1 of \cite{SalariTari} which is about non-versality of family of distance squared functions of a space cusp.
 \begin{proposition}
 Let $\gamma_s$ be any $m$-parameter family of space cusps $\gamma$. Then the big family of distance squared functions $\tilde{D} : (\mathbb{R}\times\mathbb{R}^m\times\mathbb{R}^3, (0,0,0)) \to (\mathbb{R},0)$ on $\gamma$ with $\tilde{D}(t,s,a)=\langle \gamma_s(t)-a,\gamma_s(t)-a\rangle$ is never an $\mathbb{R}^+$-versal deformation of the singularity of $D_0(t)=\tilde{D}(t,0,0)$ at $t=0$.
 \end{proposition}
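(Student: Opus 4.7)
The plan is to apply the $\mathcal R^+$-versality criterion recalled in Section \ref{sec:pre} and show that, regardless of how the family $\gamma_s$ is chosen, the classes of $1$ and of the $m+3$ initial velocities $\dot{\tilde D}_j$ are confined to a proper subspace of the local algebra $\mathcal E/\mathcal E\langle D_0'\rangle$. First I would unpack the hypothesis on $\gamma_0$: the condition of being $\mathcal A$-equivalent to $(t^2,t^3,0)$ unfolds into an identity $k\circ\gamma_0=(\tau^{2},\tau^{3},0)\circ l$ for some source diffeomorphism $l$ and target diffeomorphism $k$, from which a Taylor expansion of $k^{-1}$ together with the substitution $l(t)=t\,\tilde l(t)$ yields a factorisation
\[
\gamma_0(t)=t^{2}w(t),\qquad w(0)=\tilde l(0)^{2}\,dk^{-1}(0)(1,0,0)\neq 0,
\]
with $w:(\mathbb R,0)\to\mathbb R^{3}$ smooth.

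With this in hand, $D_0(t)=\langle\gamma_0(t),\gamma_0(t)\rangle=t^{4}\|w(t)\|^{2}$ and $\|w(0)\|^{2}>0$, so $D_0$ has an $A_3$-singularity at the origin. Writing $D_0'(t)=t^{3}u(t)$ with $u$ a unit in $\mathcal E$, the ideal $\mathcal E\langle D_0'\rangle$ equals $\mathcal E\langle t^{3}\rangle$, hence
\[
\mathcal E/\mathcal E\langle D_0'\rangle\;\cong\;\mathbb R\{1,t,t^{2}\}.
\]
Versality of $\tilde D$ therefore amounts to the three-dimensional real vector space on the right being generated by the classes of $1$ and of the $m+3$ initial velocities $\dot{\tilde D}_j$.

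The next step is to evaluate every initial velocity modulo $\mathcal E\langle t^{3}\rangle$. Differentiation in the spatial variable gives
\[
\frac{\partial\tilde D}{\partial a_i}(t,0,0)=-2\gamma_0^{(i)}(t)=-2t^{2}w_i(t)\equiv -2w_i(0)\,t^{2}\pmod{\mathcal E\langle t^{3}\rangle},
\]
so each of the three spatial derivatives is a scalar multiple of $t^{2}$. Writing $v_j(t)=\partial\gamma_s/\partial s_j(t,0)$, differentiation in the family direction yields
\[
\frac{\partial\tilde D}{\partial s_j}(t,0,0)=2\langle\gamma_0(t),v_j(t)\rangle=2t^{2}\langle w(t),v_j(t)\rangle\equiv 2\langle w(0),v_j(0)\rangle\,t^{2}\pmod{\mathcal E\langle t^{3}\rangle},
\]
again a scalar multiple of $t^{2}$. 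Hence the full span of $1$ and of the $\dot{\tilde D}_j$ sits inside the two-dimensional subspace $\mathbb R\{1,t^{2}\}$, never reaching the class of $t$, and the $\mathcal R^+$-versality criterion fails irrespective of the family and of $m$.

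The main obstacle is really the first step, justifying the factorisation $\gamma_0(t)=t^{2}w(t)$ for every representative of the cusp orbit, which requires tracking the freedom of both source reparametrisation and target diffeomorphism. Once that is in place the rest is a transparent degree count: the factor $t^{2}$ coming from the cusp forces both $\partial_{a_i}\tilde D$ and $\partial_{s_j}\tilde D$ to vanish to order at least two in $t$, so the linear class $t$ is structurally unreachable in the quotient, and no enlargement of the family can remedy this.
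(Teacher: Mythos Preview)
Your argument is correct and self-contained. The paper itself gives no independent proof, merely citing Proposition~2.1 of \cite{SalariTari} for the analogous plane-curve statement; the expected argument there is exactly the degree count you carry out: the cusp hypothesis forces $\gamma_0(t)=t^{2}w(t)$ with $w(0)\neq0$, so $D_0$ has type $A_3$, and every initial velocity $\partial_{a_i}\tilde D|_{s=0,a=0}=-2\gamma_0^{(i)}$ and $\partial_{s_j}\tilde D|_{s=0,a=0}=2\langle\gamma_0,v_j\rangle$ inherits the factor $t^{2}$, leaving the class of $t$ in $\mathcal E/\mathcal E\langle t^{3}\rangle$ unreachable. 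Your treatment has the added virtue of working for an arbitrary representative of the $\mathcal A$-orbit of $(t^{2},t^{3},0)$ rather than a fixed normal form, which is exactly what the statement requires.
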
  
 \begin{proof}
 By similar arguments to those in Proposition 2.1 of \cite{SalariTari} one can get the proof.
 
 \end{proof}
 The space cusp $\gamma$ has $\mathcal{A}$-codimension 2. Therefore following the same approach as in \cite{SalariTari}, we consider a stratification of $k$-jet space $J^k(1,3)$. For local strata in $J^k(1,3)$ let $\gamma(t)=(\alpha(t),\beta(t),\theta(t)),$ therefore we can take $\gamma(t_0)$ to be the origin and consider the Taylor expansion map $j^k\phi_{\gamma} : J \to J^k(1,3)$, where $J$ is an open interval of $\mathbb{R}$, with
 \begin{align*}
 j^k\phi_{\gamma}(t_0)=\left(\alpha'(t_0),\frac{1}{2}\alpha''(t_0), \dots ; \beta'(t_0),\frac{1}{2}\beta''(t_0), \dots ; \theta'(t_0),\frac{1}{2}\theta''(t_0), \dots\right).
 \end{align*}
 Let us denote the coordinates in $J^k(1,3)$ by $(a_1,a_2,\dots ; b_1,b_2,\dots ; c_1,c_2,\dots),$  then we have the following local strata in $J^k(1,3)$:
 
 \medskip
 \begin{tabular}{lcl}
 Cusp $(C)$&:& 
 $
  \begin{array}{l}
 a_1=b_1=c_1=0,
 \end{array}
 $
 \\
 &
 \\
 Flattening $(F)$&:& 
 $
 
 \begin{array}{l}
 a_1(b_2c_3-b_3c_2)+b_1(a_3c_2-a_2c_3)+c_1(a_2b_3-a_3b_2) = 0,
 \end{array}
 $\\
 &
 \\
 Vertex $(V)$&:&
 $
 \begin{array}{l}
 a_1 \xi_1 +b_1 \xi_2 +c_1 \xi_3 + V_2(a_1,b_1,c_1)+V_3(a_1,b_1,c_1) = 0,
 \end{array}
 $\\
  &
  \\
  Twisting $(\mathcal{T})$ &:&
  $
  \begin{array}{l}
  \mathcal{T}_6(a_1,b_1,c_1) + \mathcal{T}_7(a_1,b_1,c_1) = 0,
  \end{array}
  $ 
 \end{tabular}
 \medskip
 
 where 
 \begin{align*}
\xi_1 = & 24(b_2c_3-b_3c_2)(a_2^2+b_2^2+c_2^2), \\
\xi_2 = & 24(a_3c_2-a_2c_3)(a_2^2+b_2^2+c_2^2), \\
\xi_3 = & 24(a_2b_3-a_3b_2)(a_2^2+b_2^2+c_2^2),
 \end{align*}
 and $\mathcal{T}_i(a_1,b_1,c_1)$ and $V_i(a_1,b_1,c_1)$ denote  monomials of degree $i$ with respect to $a_1$, $b_1$ and $c_1$. 

Precisely, the above strata of $J^k(1,3)$ is captured by considering the zero set of the determinant $\det(\gamma'(0),\gamma''(0),\gamma'''(0))$ for flattening stratum $(F)$ and $A_4$-singularity of the distance squared function of $\gamma$ for vertex stratum ($V$). For the twisting stratum one can observe that the numerator of $\tau'(0)\kappa(0)-\kappa'(0)\tau(0)$ has the following expression:
 {\footnotesize 
\begin{align*}
\mathcal{T}=  36 & \big(a_1^2(b_2^2+c_2^2)-2a_1b_1a_2b_2-2a_1c_1a_2c_2+b_1^2(a_2^2+c_2^2)+c_1^2(a_2^2+b_2^2)-2b_1c_1b_2c_2\big)^2\\
& \big(a_1a_2+b_1b_2+c_1c_2\big)\big(a_1(b_2c_3-b_3c_2)+b_1(a_3c_2-a_2c_3)+c_1(a_2b_3-a_3b_2)\big)+\mathcal{T}_7(a_1,b_1,c_1).
\end{align*}}
At a cusp point we have $a_2^2+b_2^2+c_2^2\ne 0$. At such points, the strata $F$, $V$ and $\mathcal{T}$ are codimension 2 sub-varieties of $J^k(1,3)$ and contain codimension 3 sub-variety $C$. The sub-varieties $F$ and $V$ are tangent. The sub-variety $\mathcal T=\tilde{\mathcal T}_1\cup\tilde{\mathcal T}_2\cup\tilde{\mathcal T}_3,$ with $\tilde{\mathcal T}_1$, $\tilde{\mathcal T}_2$ and $\tilde{\mathcal T}_3$ smooth sub-varieties intersecting transversally along $C$. The tangent cone of $\tilde{\mathcal T}_1$ along $C$ is  $a_1a_2+b_1b_2+c_1c_2=0$ and that of $\tilde{\mathcal T}_2$ is $a_1(b_2c_3-b_3c_2)+b_1(a_3c_2-a_2c_3)+c_1(a_2b_3-a_3b_2)=0$, so $\tilde{\mathcal{T}}_2$ is tangent to $F$ and $V$ along $C$ and $\tilde{\mathcal{T}}_1$ and $\tilde{\mathcal{T}}_3$ are transverse to $F$ and $V$.

 As mentioned before, we have an $\mathcal{A}$-codimension 2 phenomenon. For a 2-parameter family of space cusps we need to have $\mathcal{A}_e$-versal deformation. Therefore we can take local parametrisation of the family of $\gamma_s$ in the form 
 \[
 \gamma_s(t)= (a_2 t^2+a_3t^3+\cdots, \bar{b}_1(s)t+\bar{b}_2(s)t^2+\cdots,\bar{c}_1(s)t+\cdots),
 \]
with $\bar{b}_1(s)=s_1$, $\bar{c}_1(s)=s_2$, $a_2 \ne 0$, $\bar{b}_2(0)=0$, $\bar{c}_2(0)=0$, $\bar{b}_3(0)=b_3\ne 0$, $\bar{b}_i(0)=b_i$ and $\bar{c}_j(0)=c_j$ for $(i\geq 4\,\,,\,\,j\geq 3)$. 

The bifurcation set in the family $\gamma_s$ consists  of the projection to the $s$-parameter space $\mathbb R^2$ 
of the pull-back of the stratification $S$ in $J^k(1,3)$ by $j^k\Phi$. 
We can compute the initial parts of the parametrisations or equations of the curves forming the bifurcation set. 

The bifurcation set of the family $\gamma_s$ consists of the following curves:
\vspace{0.5cm}

\begin{tabular}{cl}

\vspace{0.3cm}
$C$: & $s_1=s_2=0$,\\
\vspace{0.3cm}
$F$: & $s_2=\displaystyle{\frac{c_3}{b_3}s_1}$,\\
\vspace{0.4cm}
$V$: & $24a_2^3 (b_3 s_2 -c_3 s_1) +48a_2(b_3s_1+c_3s_2)(b_3s_2-c_3s_1)+O(3)=0$,\\
\vspace{0.4cm}
$\mathcal{T}$: & $\displaystyle{\frac{6a_2}{(s_1^2+s_2^2)^2}\bigg( (4a_2b_4-9a_3b_3)s_2+(9a_3c_3-4a_2c_4)s_1\bigg)}=0$,\\

\end{tabular}

Note that if $(s_1,s_2)\ne (0,0),$ then one can consider the numerator of $\mathcal{T}$ as our desire stratum. Moreover using the implicit function theorem one can get the parametrization of vertex stratum ($V$) given by 
\[
(s_1,s_2) = (t, \frac{c_3}{b_3} t + \frac{a_3(b_3^2+c_3^2)(b_3c_4-b_4c_3)}{a_2^3b_3^4} t^3 + O(t^4)).
\]
Also it is evident that the strata $F$ and $V$ are tangent and $\mathcal{T}$ is transverse to them if $b_4c_3-b_3c_4 \ne 0$. Note that it is not difficult to observe that there is no multi-local stratum in the bifurcation of family of space cusps. 
 
We have the following definition on $FRS$-generic family of a space cusp.
\begin{definition}\label{def:FRS}
Let $\gamma_0=(a_2 t^2+a_3t^3+\cdots,b_3t^3+b_4t^4+\cdots,c_3t^3+\cdots)$ be a space cusp with $a_2b_3\ne 0$ and suppose that $b_4c_3-b_3c_4 \ne 0$. A 2-parameter deformation of $\gamma_0$ is said to be $FRS$-generic if it is an $\mathcal{A}_e$-versal deformation of the space cusp $\gamma_0$. 
\end{definition}
The $FRS$-stratification of an $FRS$-generic deformation of a space cusp is given in Figure \ref{fig:FRS1}. 

\begin{figure}[h]
\centering
\includegraphics[width=0.4\textwidth]{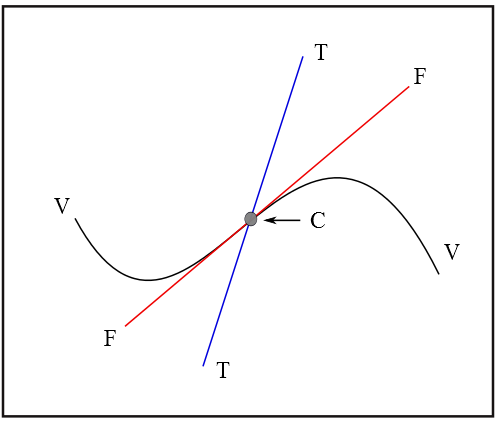}
\caption{$FRS$-stratification of a family of space cusps. } \label{fig:FRS1}
\end{figure}

\begin{theorem}\label{theo:main}
Any $FRS$-generic 2-parameter family of a space cusp is $FRS$-equivalent to $FRS$-model family $$G(t,u,v)=(t^2+t^3+t^4+O(t^5),ut+t^3+t^4+O(t^5),vt+t^3-t^4+O(t^5)).$$
\end{theorem}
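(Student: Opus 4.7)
The plan is to mirror the strategy used in \cite{SalariTari} for plane curves: first verify that the proposed model $G(t,u,v)$ is itself $FRS$-generic, then exploit the uniqueness of $\mathcal{A}_e$-versal deformations together with the matching of the geometric strata $C,F,V,\mathcal{T}$ in both parameter spaces to build the required stratified homeomorphism.

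First I would read off the relevant Taylor coefficients of the model $G$: namely $a_2=a_3=a_4=1$, $b_3=b_4=1$, $c_3=1$, $c_4=-1$. These immediately give $a_2b_3=1\neq 0$ and $b_4c_3-b_3c_4=2\neq 0$, which are the geometric non-degeneracy conditions of Definition \ref{def:FRS}. The $\mathcal{A}_e$-versality of $G$ would then be checked via the standard infinitesimal criterion: the velocity vectors $\partial G/\partial u|_{u=v=0}=(0,t,0)$ and $\partial G/\partial v|_{u=v=0}=(0,0,t)$ project to a basis of the normal space $N\mathcal{A}_e\!\cdot\!(t^2,t^3,0)$, which is 2-dimensional since the space cusp has $\mathcal{A}_e$-codimension $2$. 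Hence $G$ is $FRS$-generic.

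Next I would substitute the coefficients of $G$ into the explicit bifurcation formulas for $C$, $F$, $V$, $\mathcal{T}$ derived in the paragraph preceding Definition \ref{def:FRS}. This yields an $FRS$-stratification of the $(u,v)$-plane consisting of the origin $C$, a smooth curve $F$ through the origin, a smooth curve $V$ tangent to $F$ at the origin to third order, and a curve $\mathcal{T}$ transverse to $F$ and $V$, i.e.\ the qualitative configuration depicted in Figure \ref{fig:FRS1}. By the genericity conditions $a_2b_3\neq 0$ and $b_4c_3-b_3c_4\neq 0$, the arbitrary family $\gamma_s$ produces exactly the same qualitative stratification: three curves through the origin with the same tangency and transversality pattern, dividing a small neighborhood of $0\in\mathbb{R}^2$ into the same number of open sectors. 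A stratified homeomorphism $k:(\mathbb{R}^2_1,(S_1,0))\to(\mathbb{R}^2_2,(S_2,0))$ is then constructed sector by sector, mapping each stratum of $\gamma_s$ homeomorphically onto the corresponding stratum of $G$.

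Finally I would check that $k$ realises conditions (i)--(iii) of Definition \ref{def:main} adapted to space curves: along the complement of the strata the curves $\gamma_s$ and $G_{k(s)}$ are smooth with the same fixed number of flattenings, vertices and twistings (because these features only appear/disappear upon crossing $F$, $V$, $\mathcal{T}$ respectively); along each stratum the corresponding geometric or singular phenomenon occurs in both families of the same type (cusp on $C$, flattening on $F$, vertex on $V$, twisting on $\mathcal{T}$); and the relative position of these features is dictated by the topological arrangement of the strata, which $k$ preserves by construction. The main obstacle I anticipate is verifying that no \emph{multi-local} stratum appears for $\gamma_s$ — this is asserted at the end of the paragraph preceding Definition \ref{def:FRS} but needs to be justified carefully from the local parametrisation, since a spurious self-intersection or double flattening off the origin would break the matching of sectors; once this is ruled out the sector-by-sector construction of $k$ is a direct consequence of the transversality $b_4c_3-b_3c_4\neq 0$.
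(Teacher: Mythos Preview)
Your proposal is correct and follows essentially the same approach as the paper: verify that the model $G$ satisfies the $FRS$-genericity hypotheses of Definition~\ref{def:FRS}, observe that the bifurcation strata $C,F,V,\mathcal{T}$ computed in the section depend only on those hypotheses, and conclude that any two $FRS$-generic families share the same qualitative stratification and are therefore $FRS$-equivalent. The paper's own proof is extremely terse (two sentences), so your version simply makes explicit the steps the paper leaves implicit --- the infinitesimal versality check, the sector-by-sector construction of $k$, and the verification of conditions (i)--(iii) --- without introducing any genuinely different idea.
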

\begin{proof}
According to Definition \ref{def:FRS} all calculations in this section depend only on the fact that the 2-parameter family of space cusp is $FRS$-generic. A similar calculations hold for $G$.   
\end{proof}

{\small

P.M. : Federal university of Vi\c cosa, Brazil. \\
pouya@ufv.br

M. S. : Federal university of Vi\c cosa, Brazil. \\
84.mostafa@gmail.com
}

\begin{thebibliography}{99}
\bibitem{brucegiblinBook} J. W. Bruce and P. J. Giblin,  \textit{ Curves and singularities}. Cambridge University Press, Cambridge, 1984.

 \bibitem{David} J. M. S. David, Projection-generic curves.  \textit{ J. London Math. Soc}. 27  (1983),  552--562.
 
\bibitem{NunoFabio}F. S. Dias and J. J. Nu\~no Ballesteros, Plane curve diagrams and geometrical applications.
\textit{ Q. J. Math}. 59 (2008), 287--310.
  
 \bibitem{Faridbook} S. Izumiya, M. C. Romero Fuster, M. A. S. Ruas, F. Tari, Differential Geometry from a Singularity Theory Viewpoint. 

\bibitem{gibhob} C. G. Gibson and C. A. Hobbs, Simple singularities of space curves. \textit{Math. Proc. Camb. Phil. Soc.} 133, (1993) 297--309.
 
 \bibitem{SenhaTari}
 R. Oset Sinha and F. Tari, Projections of space curves and duality. \textit{ Q. J. Math}
 64 (2013), 281--302.
 
 \bibitem{CarmenEster2} M. C. Romero-Fuster and E. Sanabria-Codesal, generalized evolutes, vertices and conformal invariants of curves in $\mathbb R^{n+1}$. \textit{Indog. Mathem., N.S.} 10(2) (1999), 297--305.
 
 \bibitem{CarmenEster1} M. C. Romero-Fuster and E. Sanabria-Codesal,  generalized helix, twistings and flattenings of curves in $n$-spaces. 10th School on Differential Geometry (Portuguese) (Belo Horizonte, 1998).
 \textit{Mat. Contemp.} 17 (1999), 267-280. 
 
  \bibitem{SalariTari}
 M. Salarinoghabi and F. Tari, Flat and round singularity theory of plane curves. \textit{Q. J. Math.} (2017), https://doi.org/10.1093/qmath/hax022.
  
 \bibitem{Uribe}
 R. Uribe-Vargas, On vertices focal curvatures and differential geometry of space curves. 
 \textit{ Bulletin of the Brazilian Mathematical Society}, New Series, 2005, Volume 36, Number 3, Page 285.
 
 \bibitem{wall} 
 C. T. C. Wall,  Flat singularity theory. \textit{ J. London. Math. Soc.}   87 (2013), 622--640.
  
\bibitem{Tenen}
K. Tenenblat, \textit{Introdo\c c\~ ao a Geometria Diferencial}, Editora Universidade de Brasilia. (1988). 
\end{thebibliography}
\end{document}